\renewcommand{\H}{\mathbb H}
\newcommand{\leftexp}[2]{{\vphantom{#2}}^{#1}%
      \kern-\scriptspace%
      {#2}}
\renewcommand{\b}{{\mathbf b}}
\renewcommand{\L}{{\mathcal L}}
\newcommand{\Tr}{\mathrm{Tr}}
\newcommand{\Q}{{\mathbb Q}}
\newcommand{\Z}{{\mathbb Z}}
\newcommand{\F}{{\mathbb F}}
\newcommand{\R}{{\mathbb R}}
\newcommand{\C}{{\mathbb C}}
\newcommand{\bs}{\backslash}
\newcommand{\GL}{{\rm GL}}
\newcommand{\SL}{{\rm SL}}
\newcommand{\SO}{{\rm SO}}
\newcommand{\Sp}{{\rm Sp}}
\newcommand{\sym}{{\rm sym}}
\newcommand{\HH}{{\mathbb H}}
\newcommand{\mat}[4]{{\setlength{\arraycolsep}{0.5mm}\left[\begin{array}{cc}#1&#2\\#3&#4\end{array}\right]}}
\newtheorem{lemma}{Lemma}[section]
\newtheorem{theorem}[lemma]{Theorem}
\newtheorem{corollary}[lemma]{Corollary}
\newtheorem{proposition}[lemma]{Proposition}
\newtheorem{definition}[lemma]{Definition}
\theoremstyle{remark}
\begin{document}

\bibliographystyle{plain}

\title[On the growth of nearly holomorphic forms]{A note on the growth of nearly holomorphic vector-valued Siegel modular forms}

\author{Ameya Pitale}
\address{Department of Mathematics
\\ University of Oklahoma\\ Norman\\
   OK 73019, USA}
\email{apitale@math.ou.edu}

\author{Abhishek Saha}
\address{Departments of Mathematics \\
  University of Bristol\\
  Bristol BS81SN \\
  UK} \email{abhishek.saha@bris.ac.uk}

\author{Ralf Schmidt}
\address{Department of Mathematics
\\ University of Oklahoma\\ Norman\\
   OK 73019, USA}
\email{rschmidt@math.ou.edu}

\begin{abstract}
Let $F$ be a nearly holomorphic vector-valued Siegel modular form of weight $\rho$ with respect to some congruence subgroup of $\Sp_{2n}(\Q)$. In this note, we prove that the function on $\Sp_{2n}(\R)$ obtained by lifting $F$ has the moderate growth (or ``slowly increasing") property. This is a consequence of the following bound that we prove: $\|\rho(Y^{1/2})F(Z) \| \ll \prod_{i=1}^n (\mu_i(Y)^{\lambda_1/2} + \mu_i(Y)^{-\lambda_1/2})$ where $ \lambda_1 \ge \ldots \ge \lambda_n$ is the highest weight of $\rho$ and $\mu_i(Y)$ are the eigenvalues of the matrix $Y$.
\end{abstract}

 \maketitle

 \section{Introduction and statement of result}
Let $G$ be a connected reductive group over $\Q$ and $K$ a maximal compact subgroup of $G(\R)$.   One of the properties that an automorphic form on $G(\R)$ is required to satisfy is that it should be a slowly increasing function (also referred to as the \emph{moderate growth property}). We now recall the definition of this property following \cite{BorelJacquet1979}.

A norm $\| \ \|$ on $G(\R)$ is a function of the form $\|g \| = (\Tr( \sigma(g)^\ast \sigma(g)))^{1/2}$ where $\sigma: G(\R) \rightarrow \GL_r(\C)$ is a finite-dimensional representation with finite kernel and image closed in $M_r(\C)$ and such that $\sigma(K) \subseteq \SO_m$. For example, if $G=\Sp_{2n}$, we may take $\sigma$ to be the usual embedding into $\GL_{2n}(\R)$ while for $G = \GL_n$ we may take $\sigma(g) = (g, \det(g)^{-1})$ into $\GL_{n+1}(\R)$. A complex-valued function $\phi$ on $G(\R)$ is said to have the moderate growth property if there is a norm $\| \ \|$ on $G(\R)$, a constant $C$, and a positive integer $\lambda$ such that $|\phi(g)| \le C \|g\|^\lambda$ for all $g \in G(\R)$. This definition does not depend on the choice of norm.

In practice, automorphic forms on $G(\R)$ are often constructed from classical objects (such as various kinds of ``modular forms") and it is not always immediately clear that the resulting constructions satisfy the moderate growth property. For a classical modular form $f$ of weight $k$ on the upper half plane, one can prove the bound $|f(x + iy)| \le C (1 + y^{-k})$
for some constant $C$ depending on $f$. Using this bound it is easy to show that the function $\phi_f$ on $\SL_2(\R)$ attached to $f$ has the moderate growth property. More generally, if $F$ is a holomorphic Siegel modular form of weight $k$ on the Siegel upper half space $\H_n$, Sturm proved the bound $|F(X + iY)| \le C \prod_{i=1}^n (1 + \mu_i(Y)^{-k})$ where $\mu_i(Y)$  are the eigenvalues of $Y$, which can be shown to imply the moderate growth property for the corresponding function $\Phi_F$ on $\Sp_{2n}(\R)$.

Bounds of the above sort are harder to find in the literature for more general modular forms. In particular, when considering  Siegel modular forms on $\H_n$, it is more natural to consider vector-valued modular forms. Such a vector-valued form comes with a representation $\rho$ of $\GL_n(\C)$ corresponding to a highest weight $\lambda_1 \ge \ldots \ge \lambda_n \ge 0$ of integers. Furthermore, for arithmetic purposes, it is sometimes important to consider more general modular forms where the holomorphy condition is relaxed to near-holomorphy. Recall that a nearly holomorphic modular form on $\H_n$ is a function that transforms like a modular form, but instead of being holomorphic, it is a polynomial in the the entries of $Y^{-1}$ with holomorphic functions as coefficients. The theory of nearly holomorphic modular forms was developed by Shimura in substantial detail and was exploited by him and other authors to prove algebraicity and Galois-equivariance of critical values of various $L$-functions. We refer the reader to the papers~\cite{bluher, bochpilot96, heimboch, sah2, ShimuraHilbert, shimura2000} for some examples.

We remark that the moderate growth property for a certain type of modular form is absolutely crucial if one wants to use general results from the theory of automorphic forms to study these objects (as we did in our recent paper \cite{PSS14} in a certain case). It appears that a proof of the moderate growth property, while probably known to experts, has not been formally written down in the setting of nearly holomorphic vector-valued forms.  In this short note, we fill this gap in the literature.

Consider a nearly holomorphic vector-valued modular form  $F$ of highest weight $\lambda_1 \ge \ldots \ge \lambda_n \ge 0$ with respect to a congruence subgroup. The function $F$ takes values in a finite dimensional vector space $V$. For $v \in V$, denote $\| v \| = \langle v, v \rangle^{1/2}$ where we fix an $U(n)$-invariant inner product on $V$. We can lift $F$ to a $V$-valued function $\vec \Phi_F$ on $\Sp_{2n}(\R)$. For any linear functional $\L$ on $V$ consider the complex valued function $\Phi_F = \L \circ \vec \Phi_F$ on $\Sp_{2n}(\R)$. We prove the following result.
 \begin{theorem}\label{t:main}The function $\Phi_F$ defined above has the moderate growth property.
 \end{theorem}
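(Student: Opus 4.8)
The plan is to treat the displayed bound on $\|\rho(Y^{1/2})F(Z)\|$ announced in the abstract as the analytic core of the matter, and to deduce Theorem~\ref{t:main} from it by unwinding the lift and estimating the eigenvalues of $Y=\Im(Z)$ against a norm on $\Sp_{2n}(\R)$. First I would make the lift explicit: writing $g=\mat{A}{B}{C}{D}\in\Sp_{2n}(\R)$ acting on $\H_n$ by $g\langle Z\rangle=(AZ+B)(CZ+D)^{-1}$ with automorphy factor $J(g,Z)=CZ+D$, the natural lift is $\vec\Phi_F(g)=\rho(J(g,iI))^{-1}F(g\langle iI\rangle)$, and the modularity of $F$ together with the cocycle relation for $J$ shows that $\vec\Phi_F$ is left-invariant under the congruence subgroup $\Gamma$. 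The key observation is that, setting $Z=X+iY=g\langle iI\rangle$, the symplectic relation $CD^t=DC^t$ forces the imaginary part to vanish in $J(g,iI)\overline{J(g,iI)}^{\,t}=CC^t+DD^t=Y^{-1}$, so that $u:=Y^{1/2}J(g,iI)$ lies in $U(n)$ and $J(g,iI)=Y^{-1/2}u$. Hence $\rho(J(g,iI))^{-1}=\rho(u)^{-1}\rho(Y^{1/2})$, and since the chosen inner product on $V$ is $U(n)$-invariant I obtain the clean identity $\|\vec\Phi_F(g)\|=\|\rho(Y^{1/2})F(Z)\|$. Invoking the announced bound then gives $\|\vec\Phi_F(g)\|\ll\prod_{i=1}^n\big(\mu_i(Y)^{\lambda_1/2}+\mu_i(Y)^{-\lambda_1/2}\big)$.

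Second, I would estimate the eigenvalues $\mu_i(Y)$ using the norm $\|g\|=(\Tr(g^tg))^{1/2}$ attached to the standard embedding. On the one hand, $Y^{-1}=CC^t+DD^t$ gives $\mu_i(Y)^{-1}\le\Tr(CC^t+DD^t)\le\|g\|^2$ for every $i$. On the other hand, writing $g^{-1}=\mat{D^t}{-B^t}{-C^t}{A^t}$ and letting $N$ be the $2n\times n$ matrix of its first $n$ columns, one has $N^tN=CC^t+DD^t=Y^{-1}$, while for $y\in\R^n$, $\|Ny\|=\|g^{-1}(y,0)^t\|\ge\sigma_{\min}(g^{-1})\|y\|\ge\|y\|/\|g\|$, since $\sigma_{\min}(g^{-1})=\sigma_{\max}(g)^{-1}\ge\|g\|^{-1}$. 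Thus $\mu_{\min}(Y^{-1})\ge\|g\|^{-2}$, whence $\mu_i(Y)\le\|g\|^2$. Combining, $\|g\|^{-2}\le\mu_i(Y)\le\|g\|^2$, so that $\mu_i(Y)^{\lambda_1/2}+\mu_i(Y)^{-\lambda_1/2}\le 2\|g\|^{\lambda_1}$ and therefore $\|\vec\Phi_F(g)\|\ll\|g\|^{n\lambda_1}$. Since $|\Phi_F(g)|=|\L(\vec\Phi_F(g))|\le\|\L\|_{\mathrm{op}}\,\|\vec\Phi_F(g)\|$, the moderate growth property follows with exponent $\lambda=n\lambda_1$, which proves the theorem modulo the bound.

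The genuinely hard part is the bound itself, and this is where I expect the main obstacle to lie. Here I would use that $\Phi(Z):=\|\rho(Y^{1/2})F(Z)\|$ is $\Gamma$-invariant (immediate from the identity above and the left-invariance of $\vec\Phi_F$), and prove the estimate first on a Siegel fundamental domain, where $Y$ is bounded below. On such a domain the near-holomorphy of $F$ --- that it is a polynomial in the entries of $Y^{-1}$ with holomorphic coefficients whose Fourier expansions are supported on $T\ge 0$ --- forces both the holomorphic coefficients and the terms $Q(Y^{-1})$ to be bounded, while the operator norm of the twist $\rho(Y^{1/2})$ is controlled by $\prod_i\mu_i(Y)^{\lambda_1/2}$ because every weight of $\rho$ is bounded in absolute value by the top entry $\lambda_1$. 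The delicate step is the passage from the fundamental domain to an arbitrary $Z=\gamma Z_0$: one must track how $\prod_i\big(\mu_i(Y)^{\lambda_1/2}+\mu_i(Y)^{-\lambda_1/2}\big)$ behaves under the reduction $\gamma$, verifying that $\prod_i\mu_i(\Im Z_0)^{\lambda_1/2}$ is dominated by the symmetric product in $\mu_i(Y)$ even when some $\mu_i(Y)\to 0$, and that the polynomial-in-$Y^{-1}$ contributions never exceed the stated product. Carrying out this reduction-theoretic bookkeeping uniformly over the finitely many cusps of $\Gamma$ is the remaining technical hurdle.
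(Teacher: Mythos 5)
Your deduction of moderate growth from the bound $\|\rho(Y^{1/2})F(Z)\| \ll \prod_{i=1}^n\bigl(\mu_i(Y)^{\lambda_1/2}+\mu_i(Y)^{-\lambda_1/2}\bigr)$ is correct, and it is essentially the paper's own argument (Section 4), only more explicit: where the paper reduces to $g=\mat{Y^{1/2}}{XY^{1/2}}{}{Y^{-1/2}}$ using right invariance of both sides under the maximal compact, you handle general $g$ directly by observing that $u=Y^{1/2}J(g,iI)$ is unitary, so that $\|\vec\Phi_F(g)\|=\|\rho(Y^{1/2})F(Z)\|$; and your singular-value estimates $\|g\|^{-2}\le\mu_i(Y)\le\|g\|^{2}$ (via $Y^{-1}=CC^t+DD^t$ and the first $n$ columns of $g^{-1}$) are sound and yield the same exponent $r\ge n\lambda_1/2$ that the paper gets in Theorem~\ref{modgrowththeorem}. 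If the bound of Theorem~\ref{t:boundfintro} were granted as an input, this part alone would be a complete and correct proof of Theorem~\ref{t:main}.

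The gap is in your third paragraph, and you have located it yourself. Your sketch of the bound follows the paper's strategy for Theorem~\ref{t:boundf}: $\Gamma$-invariance of $\phi(Z)=\|\rho(Y^{1/2})F(Z)\|$ up to the finitely many cosets of $\Gamma\backslash\Sp_{2n}(\Z)$, Koecher-type boundedness of each $F|_\rho\gamma_r$ on regions $Y\ge\delta I_n$ (the paper's Proposition~\ref{p:koecher}, resting on the Fourier support condition $S\ge 0$), and the weight estimate for $\rho(Y^{1/2})$ (the paper's Lemma~\ref{weightineq}), giving $|\phi(\gamma Z)|\le c\,\det(Y)^{\lambda_1/2}$ for $Z$ in a fundamental domain $\F$ and all $\gamma\in\Sp_{2n}(\Z)$. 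But the step you defer as ``the remaining technical hurdle'' --- converting this fundamental-domain estimate into a bound at an arbitrary point $Z$ expressed in the eigenvalues $\mu_i(\Im Z)$ --- is precisely the nontrivial reduction-theoretic input, and the paper does not reprove it either: it invokes Sturm's Proposition 2 (\cite{sturm}, quoted as Lemma~\ref{l:sturm}), which states exactly that a function satisfying $|\phi(\gamma Z)|\le c_1\det(Y)^{\lambda}$ for all $Z\in\F$, $\gamma\in\Sp_{2n}(\Z)$ satisfies $|\phi(Z)|\le c_\phi\prod_{i=1}^n\bigl(\mu_i(Y)^{\lambda}+\mu_i(Y)^{-\lambda}\bigr)$ on all of $\H_n$. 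Without citing this lemma or supplying an independent proof of it, your argument does not yet establish the bound, and hence not the theorem; with that one citation added, your write-up closes up into a proof that matches the paper's.
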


The above theorem is a direct consequence of the following bound.

\begin{theorem}\label{t:boundfintro}For any nearly holomorphic vector-valued modular form  $F$ as above, there is a constant $C$ (depending only on $F$) such that for all $Z = X+iY \in \H_n$ we have $$\|\rho(Y^{1/2})F(Z) \| \le C \prod_{i=1}^n (\mu_i(Y)^{\lambda_1/2} + \mu_i(Y)^{-\lambda_1/2}).$$
\end{theorem}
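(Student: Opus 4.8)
The plan is to reformulate the left-hand side as a $\Gamma$-invariant quantity, estimate it on a Siegel fundamental domain, and then transport the bound to an arbitrary point by reduction theory, following the classical scalar-valued holomorphic case.

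First I would show that $\phi(Z) := \|\rho(Y^{1/2})F(Z)\|$ is invariant under the congruence subgroup $\Gamma$. Write $\gamma = \mat{A}{B}{C}{D}$, set $\mathcal J = CZ+D$, and recall that $Y' := \Im(\gamma Z) = {}^t\overline{\mathcal J}^{-1}\,Y\,\mathcal J^{-1}$, so that ${}^t\overline{\mathcal J}\,Y'\,\mathcal J = Y$, while $F(\gamma Z) = \rho(\mathcal J)F(Z)$. Fixing the $U(n)$-invariant inner product on $V$, the representation $\rho$ of $\GL_n(\C)$ satisfies $\langle \rho(g)v,w\rangle = \langle v,\rho({}^t\bar g)w\rangle$. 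Since $(Y')^{1/2}$ is real symmetric, with $g := (Y')^{1/2}\mathcal J$ we get ${}^t\bar g\, g = {}^t\overline{\mathcal J}\,(Y')^{1/2}(Y')^{1/2}\mathcal J = {}^t\overline{\mathcal J}\,Y'\,\mathcal J = Y$, hence
$$\|\rho((Y')^{1/2})F(\gamma Z)\|^2 = \|\rho(g)F(Z)\|^2 = \langle F(Z),\rho({}^t\bar g\,g)F(Z)\rangle = \langle F(Z),\rho(Y)F(Z)\rangle = \phi(Z)^2.$$
Thus $\phi(\gamma Z)=\phi(Z)$; this is the vector-valued analogue of the invariance of $\det(Y)^{k/2}|F(Z)|$.

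Next I would bound $\phi$ on a Siegel domain $\mathcal S$, where $X'$ is bounded and $\mu_i(Y')\ge c>0$. Using the structure of nearly holomorphic forms — $F$ is a polynomial in the entries of $Y^{-1}$ with holomorphic coefficients whose Fourier expansions are supported on positive semidefinite matrices — one checks that $\|F(Z')\|$ is bounded on $\mathcal S$. For the twisting factor, diagonalizing $Y' = k\,\diag(\mu_i(Y'))\,{}^t k$ with $k\in O(n)\subseteq U(n)$ and using that $\rho(k)$ is unitary, the operator norm $\|\rho((Y')^{1/2})\|_{\mathrm{op}}$ equals $\max_m \prod_i \mu_i(Y')^{m_i/2}$, the maximum over the weights $m=(m_1,\dots,m_n)$ of $\rho$; since $|m_i|\le\lambda_1$ for every such weight, this is at most $\prod_i(\mu_i(Y')^{\lambda_1/2}+\mu_i(Y')^{-\lambda_1/2})$, which on $\mathcal S$ simplifies (using $\mu_i(Y')\ge c$) to $C\det(Y')^{\lambda_1/2}$. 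Altogether $\phi(Z')\le C\det(Y')^{\lambda_1/2}$ for $Z'\in\mathcal S$.

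Finally, given arbitrary $Z\in\H_n$, reduction theory furnishes $\gamma\in\Gamma$ with $Z^\ast := \gamma Z$ lying in one of the finitely many translates of the Siegel domain making up a fundamental domain for $\Gamma$; by invariance $\phi(Z)=\phi(Z^\ast)\le C\det(Y^\ast)^{\lambda_1/2}$, and it remains to bound $\det(Y^\ast)=\det\Im(\gamma Z)$ by the eigenvalues of the original $Y$. I expect the main obstacle to be precisely this reduction-theoretic estimate,
$$\det\Im(\gamma Z)\ \le\ C\prod_{i=1}^n\max\bigl(\mu_i(Y),\mu_i(Y)^{-1}\bigr)\qquad(\gamma\in\Gamma),$$
the $\Sp_{2n}$-analogue of the elementary inequality $\Im(\gamma z)\le\max(y,y^{-1})$ for $\SL_2(\Z)$. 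Granting it, and since $\max(t^{\lambda_1/2},t^{-\lambda_1/2})\le t^{\lambda_1/2}+t^{-\lambda_1/2}$, we obtain $\phi(Z)\le C\prod_i(\mu_i(Y)^{\lambda_1/2}+\mu_i(Y)^{-\lambda_1/2})$, as asserted. The displayed estimate should follow from Minkowski reduction together with a lower bound for $|\det(CZ+D)|$ coming from the primitivity of the bottom block $(C\ D)$ of a symplectic integral matrix; this, and the careful treatment of the finitely many cusps of $\Gamma$, is where the real work lies.
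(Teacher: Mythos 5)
Your overall strategy matches the paper's: make $\phi(Z)=\|\rho(Y^{1/2})F(Z)\|$ interact well with the group action, bound it on a Siegel-type domain using the Koecher principle (Fourier support on $S\ge0$) together with a weight estimate for $\rho(Y^{1/2})$, and then transport the bound to all of $\H_n$ by reduction theory. However, two steps are genuinely problematic.

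First, your transport step rests on a false claim. What your Siegel-domain bound actually gives, for $Z^*=\gamma_r Z'$ with $Z'\in\mathcal S$, is
$$\phi(Z^*)=\|\rho((\Im Z')^{1/2})(F|_{\rho}\gamma_r)(Z')\|\le C\det(\Im Z')^{\lambda_1/2},$$
and even this requires the Koecher principle for each translate $F|_{\rho}\gamma_r$ (i.e.\ the nice Fourier expansion at every cusp), which you invoke only for $F$ itself. The quantity $\det(\Im Z')$ is \emph{not} comparable to $\det(Y^*)$: they differ by the factor $|\det J(\gamma_r,Z')|^{2}$, which is unbounded on $\mathcal S$ whenever the lower-left block of $\gamma_r$ is nonzero. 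Consequently your inequality $\phi(Z^*)\le C\det(Y^*)^{\lambda_1/2}$ on the fundamental domain of $\Gamma$ fails near any cusp other than $\infty$. Already for $n=1$: if $E$ is a weight-$k$ Eisenstein series for $\Gamma_0(N)$ with nonzero constant term at the cusp $0$, then at $z^*=-1/z'$, $z'=iy'$, $y'\to\infty$, one has $\phi(z^*)\asymp (y')^{k/2}\to\infty$ while $\det(Y^*)^{k/2}=(y')^{-k/2}\to0$. The correct statement to carry forward---and the one the paper uses---is $\phi(\gamma Z)\le C\det(\Im Z)^{\lambda_1/2}$ for all $Z\in\F$ and all $\gamma\in\Sp_{2n}(\Z)$: the bound must be expressed in terms of the imaginary part of the \emph{reduced representative}, not of the point $\gamma Z$ itself. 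Your steps 1 and 2 essentially prove this corrected statement once Koecher is applied to the finitely many translates $F|_{\rho}\gamma_r$.

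Second, the estimate you yourself flag as ``where the real work lies''---the reduction-theoretic inequality producing the factor $\prod_i(\mu_i(Y)^{\lambda_1/2}+\mu_i(Y)^{-\lambda_1/2})$---is left unproved, and (once the first issue is repaired) it is needed for all $\gamma\in\Sp_{2n}(\Z)$, not merely $\gamma\in\Gamma$. This missing input is precisely the result the paper quotes as a lemma (Prop.~2 of Sturm): if $|\phi(\gamma Z)|\le c_1\det(\Im Z)^\lambda$ for all $Z\in\F$ and $\gamma\in\Sp_{2n}(\Z)$, then $|\phi(Z)|\le c\prod_i(\mu_i(Y)^\lambda+\mu_i(Y)^{-\lambda})$ on all of $\H_n$. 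So your outline reproduces the architecture of the paper's proof, but its one nontrivial external input is absent, and the step that would feed into it is misstated.
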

 The proof of Theorem \ref{t:boundfintro}, as we will see, is extremely elementary. It uses nothing other than the existence of a Fourier expansion, and is essentially a straightforward extension of arguments that have appeared in the classical case, e.g., in \cite{maassbook} or \cite{sturm}. This argument is very flexible and can be modified to provide a bound for Siegel-Maass forms. With some additional work  (which we do not do here), Theorem \ref{t:boundfintro} can be used to derive a bound on the Fourier coefficients of $F$. We also remark that the bound in Theorem \ref{t:boundfintro} can be substantially improved if $F$ is a cusp form.

\subsection*{Notations}
For a positive integer $n$ and a commutative ring $R$, let $M_n^{\sym}(R)$ be the set of symmetric $n\times n$ matrices with entries in $R$. For $X,Y\in M_n^{\sym}(\R)$, we write $X>Y$ if $X-Y$ is positive definite. Let $\H_n$ be the Siegel upper half space of degree $n$, i.e., the set of $Z=X+iY\in M_n^{\sym}(\C)$ whose imaginary part $Y$ is positive definite. For such $Z$ and $g=\mat{A}{B}{C}{D}\in\Sp_{2n}(\R)$, let $J(g,Z)=CZ+D$.

For any complex matrix $X$ we denote by $X^*$ its transpose conjugate. For positive definite $Y=(y_{ij})\in M_n^{\sym}(\R)$, let $\|Y\| = \max_{i,j} |y_{ij}|$. We denote by $\mu_i(Y)$ the $i$-th eigenvalue of $Y$, in decreasing order.
\section{Nearly holomorphic functions and Fourier expansions}
For a non-negative integer $p$, we let $N^p(\HH_n)$ denote the space of all polynomials in the entries of $Y^{-1}$ with total degree $\le p$  and with holomorphic functions on $\HH_n$ as coefficients. The space
$$
 N(\HH_n)=\bigcup_{p\geq0}N^p(\HH_n)
$$
is the space of \emph{nearly holomorphic functions} on $\H_n$.

It will be useful to have some notation for polynomials in matrix entries. Let  $$R_n = \{(i,j): 1 \le i \le j \le n\}.$$ Let $$T_n^p = \{\mathbf{b} = (b_{i,j}) \in \Z^{R_n}:b_{i,j} \ge 0, \sum_{(i,j)\in R_n} b_{i,j} \le p\}.$$  For any $V = (v_{i,j}) \in M_n^{\sym}(\R)$, and any $\b \in T_n^p$, we define $[V]^{\mathbf{\b}} = \prod_{(i,j) \in R_n }  v_{i, j}^{b_{i,j}}$. In particular, a function $F$ on $\H_n$  lies in $N^p(\H_n)$ if and only if there are holomorphic functions $G_\b$ on $\H_n$ such that $$F(Z) = \sum_{\b \in T_n^p} G_\b(Z) [Y^{-1}]^\b.$$

\begin{definition}For any $\delta>0$, we define
$$V_\delta = \{Y \in M_n^{\sym}(\R): Y \ge \delta I_n \}.$$
\end{definition}

\begin{lemma}\label{lem:bddvert} Given any $Y \in V_\delta$, we have $\|Y^{-1}\| \le \delta^{-1}$.
\end{lemma}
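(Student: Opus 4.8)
The plan is to bound the max-entry norm $\|Y^{-1}\|$ by the operator (spectral) norm of $Y^{-1}$ and then invoke the eigenvalue hypothesis. First I would unwind the hypothesis $Y \in V_\delta$, i.e. $Y \ge \delta I_n$: this says that $Y - \delta I_n$ is positive semidefinite, so every eigenvalue of the symmetric matrix $Y$ satisfies $\mu_i(Y) \ge \delta$; in particular the smallest eigenvalue obeys $\mu_n(Y) \ge \delta$. Since $Y$ is positive definite, $Y^{-1}$ is again symmetric and positive definite with eigenvalues $\mu_i(Y)^{-1}$, so its largest eigenvalue is $\mu_n(Y)^{-1} \le \delta^{-1}$.

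Next I would pass from eigenvalues to entries. For a symmetric matrix $A$ the largest eigenvalue equals the operator norm $\|A\|_{\mathrm{op}} = \sup_{\|v\|=1} |v^\ast A v|$, and every entry is controlled by it: writing $e_i$ for the standard basis vectors, $|A_{ij}| = |e_i^\ast A e_j| \le \|A\|_{\mathrm{op}}$ by Cauchy--Schwarz, since $\|e_i\| = \|e_j\| = 1$. Applying this with $A = Y^{-1}$ gives $|(Y^{-1})_{ij}| \le \mu_n(Y)^{-1} \le \delta^{-1}$ for all $i,j$, and taking the maximum over $i,j$ yields $\|Y^{-1}\| \le \delta^{-1}$, as claimed.

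There is essentially no serious obstacle here: the only point requiring a little care is the elementary linear-algebra fact that the max-entry norm of a symmetric matrix is dominated by its spectral norm. An alternative, entirely self-contained route avoids even naming the operator norm: one notes that $Y \ge \delta I_n$ implies $Y^{-1} \le \delta^{-1} I_n$ (by conjugating the inequality $Y \ge \delta I_n$ with $Y^{-1/2}$), whence each diagonal entry satisfies $(Y^{-1})_{ii} = e_i^\ast Y^{-1} e_i \le \delta^{-1}$, while positive definiteness forces $|(Y^{-1})_{ij}| \le \sqrt{(Y^{-1})_{ii}\,(Y^{-1})_{jj}} \le \delta^{-1}$ for the off-diagonal entries via nonnegativity of the $2 \times 2$ principal minors. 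Either way the desired bound follows immediately.
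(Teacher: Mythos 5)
Your proposal is correct, and your primary route is genuinely (if mildly) different from the paper's. You pass from entries to the spectral norm: every entry of a symmetric matrix $A$ satisfies $|e_i^\ast A e_j| \le \|A\|_{\mathrm{op}}$ by Cauchy--Schwarz, and for the positive definite matrix $Y^{-1}$ the operator norm is the largest eigenvalue $\mu_n(Y)^{-1} \le \delta^{-1}$. The paper instead stays entirely at the level of entries: it first notes that for a positive definite matrix the max-entry norm is attained on the diagonal (because each $2\times 2$ principal minor has positive determinant, so $|y_{ij}| \le \sqrt{y_{ii}\,y_{jj}} \le \max_i y_{ii}$), and then bounds the diagonal entries of $Y^{-1}$ using the Loewner-order implication $Y \ge \delta I_n \Rightarrow Y^{-1} \le \delta^{-1} I_n$. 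Your ``alternative, entirely self-contained route'' in the last paragraph is precisely the paper's argument, and you even supply the conjugation-by-$Y^{-1/2}$ justification for the Loewner inversion that the paper asserts without proof. The two approaches cost the same; the spectral-norm version is the one that generalizes beyond positive definite matrices (the entrywise bound $|A_{ij}| \le \|A\|_{\mathrm{op}}$ needs no definiteness), while the paper's version uses nothing but positivity of minors and so fits the minimal toolkit the paper sets up.
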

\begin{proof}Note that for any positive definite matrix $Y' = (y'_{ij})$ we have $\|Y' \| = \max_{i,j} |y'_{ij}| =  \max_{i} y'_{ii}$. This is an immediate consequence of the fact that each $2 \times 2$ principal minor has positive determinant and each diagonal entry is positive.

So it suffices to show that each diagonal entry of $Y^{-1}$ is less than or equal to $\delta^{-1}$.
But the assumption $Y \ge \delta I_n$ implies that $Y^{-1} \le \delta^{-1}I_n$, which implied the desired fact above.
\end{proof}
An immediate consequence of this lemma is that for any $\delta\le 1$, $Y \in V_\delta$ and  $\b \in T_n^p$, we have $|[Y^{-1}]^\b| \le \delta^{-p}$.

\begin{definition}We say that $F \in N^p(\H^n)$ has a nice Fourier expansion if there exists an integer $N$ and complex numbers $a_{\b}(F,S)$ for all $0 \le S \in \frac1N M_n^{\sym}(\Z)$, such that  we have  an expression
$$
 F(Z) = \sum_{\b \in T_n^p}\,\sum_{\substack{S \in \frac1N M_n^{\sym}(Z)\\S\ge 0}} a_{\b}(F,S) e^{2 \pi i \Tr(SZ)} [Y^{-1}]^\b
$$
that converges absolutely and uniformly on compact subsets of $\H_n$.
\end{definition}
Note that a key point in the above definition is that the sum is taken only over positive semidefinite matrices.
The next proposition, which is well-known in the holomorphic case, shows that this implies a certain boundedness property for the function $F$.

\begin{proposition}\label{p:koecher}Let $F \in N^p(\H^n)$ have a nice Fourier expansion. Then for any $\delta>0$, the function $F(Z)$ is bounded in the region
$\{Z = X +i Y: Y \in V_\delta\}$.
\end{proposition}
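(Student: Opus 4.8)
The plan is to bound $|F(Z)|$ directly from its nice Fourier expansion via the triangle inequality, separating the two kinds of factors that appear. Writing $Z = X+iY$ with $Y \in V_\delta$, the triangle inequality gives
$$|F(Z)| \le \sum_{\b \in T_n^p}\ \sum_{\substack{S \in \frac1N M_n^{\sym}(\Z)\\ S\ge 0}} |a_\b(F,S)|\,\bigl|e^{2\pi i \Tr(SZ)}\bigr|\,\bigl|[Y^{-1}]^\b\bigr|.$$
The two factors behave very differently. The polynomial factors $\bigl|[Y^{-1}]^\b\bigr|$ are harmless: after replacing $\delta$ by $\min(\delta,1)$ and using $V_\delta \subseteq V_{\min(\delta,1)}$ we may assume $\delta \le 1$, and then the consequence of Lemma \ref{lem:bddvert} bounds them by $\delta^{-p}$ uniformly on $V_\delta$. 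The exponential factors satisfy $\bigl|e^{2\pi i\Tr(SZ)}\bigr| = e^{-2\pi\Tr(SY)}$, because $\Tr(SX)$ is real. Thus everything reduces to bounding the exponential series $\sum_\b\sum_S |a_\b(F,S)|\,e^{-2\pi\Tr(SY)}$ uniformly for $Y \in V_\delta$.

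The geometric input that controls this series is the positivity built into the definition of a nice Fourier expansion, namely $S \ge 0$. For $S \ge 0$ and $Y \ge \delta I_n$ one has $\Tr(SY) \ge \delta \Tr(S)$, since $\Tr\bigl(S(Y - \delta I_n)\bigr) = \Tr\bigl(S^{1/2}(Y - \delta I_n)S^{1/2}\bigr) \ge 0$ is the trace of a positive semidefinite matrix. Hence $e^{-2\pi\Tr(SY)} \le e^{-2\pi\delta\Tr(S)}$ for every $Y \in V_\delta$, and therefore
$$|F(Z)| \le \delta^{-p} \sum_{\b \in T_n^p}\ \sum_{0 \le S} |a_\b(F,S)|\, e^{-2\pi\delta\Tr(S)}.$$
The right-hand side is independent of $Z$, so it remains only to check that this constant is finite.

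Finiteness I would deduce from absolute convergence of the expansion at a single carefully chosen point. The naive choice $Z = i\delta I_n$ fails: since $(\delta I_n)^{-1}$ is diagonal, every monomial $[(\delta I_n)^{-1}]^\b$ with $\b$ supported off the diagonal vanishes, so no information about those coefficients is obtained. Instead I would take $Y_0 = W_0^{-1}$ with $W_0 = \delta^{-1} I_n + \Omega$, where $\Omega$ is the $n\times n$ all-ones matrix; then $W_0 \ge \delta^{-1}I_n$ is positive definite with all entries strictly positive, so $0 < Y_0 \le \delta I_n$ and $[Y_0^{-1}]^\b = [W_0]^\b > 0$ for every $\b \in T_n^p$. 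Absolute convergence at $Z_0 = iY_0$ says that the nonnegative double series $\sum_\b\sum_S |a_\b(F,S)|\,e^{-2\pi\Tr(SY_0)}\,[W_0]^\b$ converges; regrouping this convergent series of nonnegative terms and using that there are only finitely many $\b$, each with $[W_0]^\b > 0$, yields $\sum_S |a_\b(F,S)|\, e^{-2\pi\Tr(SY_0)} < \infty$ for every $\b$. Finally, $Y_0 \le \delta I_n$ gives $\Tr(SY_0) \le \delta\Tr(S)$ and hence $e^{-2\pi\delta\Tr(S)} \le e^{-2\pi\Tr(SY_0)}$, so the constant above is dominated by $\delta^{-p}\sum_\b\sum_S |a_\b(F,S)|\,e^{-2\pi\Tr(SY_0)} < \infty$, uniformly in $Z$.

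The only genuinely non-routine step is the selection of the auxiliary point $Y_0$: it must lie below $\delta I_n$ (to dominate the worst-case decay $e^{-2\pi\delta\Tr(S)}$) and simultaneously have an inverse with no zero entries (so that convergence of the full expansion transfers to each coefficient family $\{a_\b(F,S)\}_S$ separately). The rank-one correction $\Omega$ achieves both at once. Everything else is the triangle inequality together with the elementary trace inequality $\Tr(SY) \ge \delta\Tr(S)$.
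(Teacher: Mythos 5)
Your proof is correct, and its outer structure matches the paper's: triangle inequality, the uniform bound $|[Y^{-1}]^\b|\le\delta^{-p}$ from Lemma \ref{lem:bddvert}, and the trace inequality $\Tr(SY)\ge\delta\Tr(S)$ for $S\ge 0$, $Y\ge\delta I_n$ (which the paper proves the same way, writing $Y=\delta I_n+Y_1^2$). Where you genuinely diverge is the finiteness of the resulting $Z$-independent constant. The paper first bounds the individual coefficients, $|a_\b(F,S)|\le R_\b(\delta I_n/2)\,e^{\delta\pi\Tr(S)}$ where $R_\b(Y)=\sum_{S\ge 0}|a_\b(F,S)|e^{-2\pi\Tr(SY)}$, and then invokes convergence of the theta-type series $\sum_{0\le S\in\frac1N M_n^{\sym}(\Z)}e^{-\delta\pi\Tr(S)}$, citing Maass's book for that fact. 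You instead extract finiteness of $\sum_S|a_\b(F,S)|e^{-2\pi\delta\Tr(S)}$ directly from absolute convergence of the expansion at the single point $iY_0$ with $Y_0=(\delta^{-1}I_n+\Omega)^{-1}$, chosen so that $Y_0\le\delta I_n$ and every entry of $Y_0^{-1}$ is strictly positive. Your route buys two things: it is entirely self-contained (no external convergence fact about theta series is needed), and it rigorously addresses a point the paper elides. Indeed, the paper asserts that $R_\b(Y)$ converges for every $Y>0$ simply ``by the notion of a nice Fourier expansion,'' but absolute convergence at a point whose imaginary part is diagonal gives no information about coefficients $a_\b(F,S)$ with $\b$ supported off the diagonal, since the corresponding monomials vanish --- and $\delta I_n/2$, the very point at which the paper evaluates $R_\b$, is diagonal. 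Your rank-one perturbation by the all-ones matrix $\Omega$ is precisely the device needed to make that assertion legitimate (one can also use it to prove the paper's claim in full: given any $Y>0$, evaluate at a suitable $Y'\le Y$ whose inverse has no zero entries). What the paper's route buys in exchange is a quantitative, reusable bound on the individual Fourier coefficients, $|a_\b(F,S)|\le R_\b(\delta I_n/2)e^{\delta\pi\Tr(S)}$, which is of independent interest (e.g.\ for the Fourier-coefficient bounds alluded to in the introduction) but which your argument does not produce.
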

\begin{proof}We may assume that $\delta \le 1$.
By the notion of a nice Fourier expansion,  for each $\b \in T_n^p$, the series $$R_{\b}(Y):=\sum_{\substack{S \in \frac1N M_n^{\sym}(Z)\\S\ge 0}} |a_{\b}(F,S)| e^{-2 \pi \Tr(SY)}$$ converges for any $0<Y \in M_n^{\sym}(\R)$. For any $Z$ in the given region, using Lemma \ref{lem:bddvert}, we get
\begin{equation}\label{e:feabs}
|F(Z)| \le  \sum_{\b \in T_n^p} R_{\b}(Y) \delta^{-p},
\end{equation}
and so to prove the proposition it suffices to show that each $R_{\b}(Y)$ is bounded in the region $Y \ge \delta$. By positivity, we have $$|a_{\b}(F,S)\ e^{-2\pi \,{\rm Tr}(SY)}|\le R_{\b}(Y)$$ for each $Y>0$ and each $S\in \frac1N M_n^{\sym}(\Z)$. Therefore \begin{equation}\label{e:febd}|a_{\b}(F,S)| \le R_{\b}(\delta I_n/2)e^{\delta \pi \,{\rm Tr}(S)}.\end{equation} Next, note that if $Y \ge \delta I_n$, then ${\rm Tr}(SY) \ge \delta {\rm Tr}(S)$ for all $S \ge 0$. To see this, we write $Y= \delta I_n + Y_1^2$ where $Y_1 \ge 0$ is the square-root of $Y-\delta I_n$. As $S \ge 0$ we have ${\rm Tr}(Y_1SY_1) \ge 0$ and consequently ${\rm Tr}(SY) = {\rm Tr}(S\delta I_n) + {\rm Tr}(Y_1SY_1) \ge {\rm Tr}(S\delta I_n)$.

 Using the above and \eqref{e:febd}, we have for all $Y \ge \delta I_n$ $$R_{\b}(Y) \le R_{\b}(\delta I_n/2) \sum_{0 \le S \in \frac1N M_n^{\sym}(\Z)}
e^{-\delta \pi \,{\rm Tr}(S)}.$$  As the sum $\sum_{0 \le S \in \frac1N M_n^{\sym}(\Z)}
e^{-\delta \pi \,{\rm Tr}(S)}$ converges to a finite value (for a proof of this fact, see \cite[p. 185]{maassbook}) this completes the proof that $R_{\b}(Y)$ is bounded in the region
$Y \ge \delta I_n$.

\end{proof}

 \section{Bounding nearly holomorphic vector-valued modular forms}
Let $(\rho, V)$ be a finite-dimensional rational representation of $\GL_n(\C)$ and $\langle, \rangle$ be a (unique up to multiples) $U(n)$-invariant inner product on $V$. In fact, the inner product $\langle, \rangle$ has the property that
$$
\langle \rho(M) v_1, v_2\rangle = \langle  v_1, \rho(M^\ast) v_2\rangle
$$
for all $M \in \GL_n(\C)$. (To see this, note that it's enough to check it on the Lie algebra level. It's true for the real subalgebra $u(n)$ and so by linearity it follows for all of $\rm{gl}(n,\C)$.) For any $v \in V$, we define $$\|v \| = \langle v, v \rangle^{1/2}.$$

As is well known, the representation $\rho$ has associated to it an $n$-tuple $\lambda_1 \ge \lambda_2 \ge \ldots \ge \lambda_n \ge 0$ of integers known as the highest weight of $\rho$. We let $d_\rho$ denote the dimension of $\rho$.

We define a right action of $\Sp_{2n}(\R)$ on the space of smooth $V$-valued functions on $\H_n$ by
\begin{equation}\label{slashoperatoreq}
 (F\big|_{\rho}g)(Z)=\rho(J(g,Z))^{-1}F(gZ)\qquad\text{for }g\in\Sp_{2n}(\R),\;Z\in\HH_n.
\end{equation}

A \emph{congruence subgroup} of $\Sp_{2n}(\Q)$  is a subgroup that is commensurable with $\Sp_{2n}(\Z)$ and contains a principal congruence subgroup of $\Sp_{2n}(\Z)$. For a congruence subgroup $\Gamma$ and a non-negative integer $p$, let $N^p_{\rho}(\Gamma)$ be the space of all functions $F:\HH_n\to V$ with the following properties.
\begin{enumerate}
 \item For any $g\in\Sp_{2n}(\Q)$ and any linear map $\L: V \rightarrow \C$, the function $\L \circ (F\big|_{\rho}g)$ lies in $N^p(\H_n)$ and has a nice Fourier expansion.
 \item $F$ satisfies the transformation property \begin{equation}\label{modformeq1}
 F\big|_{\rho}\gamma=F\qquad\text{for all }\gamma\in\Gamma.
\end{equation}
\end{enumerate}
Let $N_{\rho}(\Gamma)=\bigcup_{p\geq0}N^p_{\rho}(\Gamma)$. We refer to $N_{\rho}(\Gamma)$ as the space of \emph{nearly holomorphic Siegel modular forms} of weight $\rho$ with respect to $\Gamma$. We put $N^{(n)}_{\rho} = \bigcup_\Gamma N_{\rho}(\Gamma),$  the space of all nearly holomorphic Siegel modular forms of weight $\rho$.

Recall that for any $Y>0$ in $M_n^{\sym}(\R)$, we let $\mu_1(Y) \ge \mu_2(Y) \ge \ldots \ge \mu_n(Y)>0$ denote the eigenvalues of $Y$. We can now state our main result.
\begin{theorem}\label{t:boundf}For any $F \in N^{(n)}_{\rho}$, there is a constant $C_F$ (depending only on $F$) such that for all $Z = X+iY \in \H_n$ we have $$\|\rho(Y^{1/2})F(Z) \| \le C_F \prod_{i=1}^n (\mu_i(Y)^{\lambda_1/2} + \mu_i(Y)^{-\lambda_1/2}).$$
\end{theorem}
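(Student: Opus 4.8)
The plan is to bound the quantity $\Psi(Z):=\|\rho(Y^{1/2})F(Z)\|$, which is exactly the left-hand side of the claimed inequality, via a representation-theoretic estimate, a transformation identity, a Fourier-expansion bound on a bounded-below region, and a final reduction-theory step. I would first establish the operator-norm bound $\|\rho(Y^{1/2})\|_{\mathrm{op}}\le\prod_{i=1}^n(\mu_i(Y)^{\lambda_1/2}+\mu_i(Y)^{-\lambda_1/2})$. Diagonalizing $Y=k\,\diag(\mu_1(Y),\dots,\mu_n(Y))\,k^{-1}$ with $k\in\OO(n)\subseteq U(n)$ and using that $\rho(k)$ is unitary for the fixed $U(n)$-invariant inner product (a consequence of $\langle\rho(M)v_1,v_2\rangle=\langle v_1,\rho(M^*)v_2\rangle$ with $k^*=k^{-1}$), the problem reduces to $\diag(\mu_i^{1/2})$, which acts on the weight space for $w=(w_1,\dots,w_n)$ by $\prod_i\mu_i^{w_i/2}$. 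Since $\rho$ has highest weight $\lambda$ with $\lambda_i\ge0$, the set of weights is $S_n$-invariant and dominated by $\lambda$, so $0\le w_i\le\lambda_1$; hence $\prod_i\mu_i^{w_i/2}\le\prod_i\max(\mu_i(Y)^{\lambda_1/2},1)\le\prod_i(\mu_i(Y)^{\lambda_1/2}+\mu_i(Y)^{-\lambda_1/2})$.

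Next I would record how $\Psi$ transforms. For $g\in\Sp_{2n}(\R)$, put $J=J(g,Z)$ and $Y'=\Im(gZ)=(J^*)^{-1}YJ^{-1}$, so that $(Y')^{-1}=JY^{-1}J^*$. Then $M:=(Y')^{1/2}JY^{-1/2}$ satisfies $MM^*=(Y')^{1/2}(JY^{-1}J^*)(Y')^{1/2}=I$, so $M$ is unitary; writing $Y^{1/2}J^{-1}=M^{-1}(Y')^{1/2}$ and using that $\rho(M^{-1})$ is unitary yields the identity
\[\|\rho(Y^{1/2})(F|_{\rho}g)(Z)\|=\|\rho(Y^{1/2}J^{-1})F(gZ)\|=\|\rho((Y')^{1/2})F(gZ)\|=\Psi(gZ).\]
In particular $\Psi$ is invariant under $\Gamma$, and replacing $F$ by $F|_{\rho}g$ intertwines $\Psi$ with the $g$-translate of $Z$.

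I would then fix $\delta_0=\delta_0(n)>0$ from reduction theory and bound $\Psi$ on $V_{\delta_0}$ using the Fourier expansion. With $\Gamma'=\Gamma\cap\Sp_{2n}(\Z)$ of finite index in $\Sp_{2n}(\Z)$, the relation $F|_{\rho}(\eta\gamma)=F|_{\rho}\gamma$ for $\eta\in\Gamma'$ shows that $\{F|_{\rho}\gamma:\gamma\in\Sp_{2n}(\Z)\}$ is a finite set, indexed by $\Gamma'\backslash\Sp_{2n}(\Z)$. By hypothesis (1) each $\L\circ(F|_{\rho}\gamma)$ lies in $N^p(\H_n)$ with a nice Fourier expansion, so Proposition \ref{p:koecher} (applied to $\L$ running over an orthonormal basis of functionals) gives a single constant $M_{\delta_0}$ with $\|(F|_{\rho}\gamma)(Z')\|\le M_{\delta_0}$ whenever $\Im Z'\in V_{\delta_0}$. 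Combining this with the operator-norm bound and $\prod_i\mu_i(Y')=\det Y'$ gives, for $Y'=\Im Z'\ge\delta_0 I_n$, the estimate $\|\rho((Y')^{1/2})(F|_{\rho}\gamma)(Z')\|\le C_1(\det Y')^{\lambda_1/2}$, the terms $\mu_i(Y')^{-\lambda_1/2}\le\delta_0^{-\lambda_1/2}$ being absorbed into $C_1$.

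Finally, for arbitrary $Z$ I would write $Z=\gamma Z'$ with $\gamma\in\Sp_{2n}(\Z)$ and $Z'$ in a Siegel fundamental domain (so $Y'=\Im Z'\ge\delta_0 I_n$); the transformation identity and the previous step give $\Psi(Z)=\|\rho((Y')^{1/2})(F|_{\rho}\gamma)(Z')\|\le C_1(\det Y')^{\lambda_1/2}$. Since $\det Y'=\det Y\,|\det J(\gamma^{-1},Z)|^{-2}$, the proof reduces to the comparison
\[\det Y'\ \le\ C_n\prod_{i=1}^n\max\bigl(\mu_i(Y),\mu_i(Y)^{-1}\bigr),\]
after which $(\det Y')^{\lambda_1/2}\le C_n^{\lambda_1/2}\prod_i(\mu_i(Y)^{\lambda_1/2}+\mu_i(Y)^{-\lambda_1/2})$ finishes the theorem with $C_F=C_1C_n^{\lambda_1/2}$. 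I expect this comparison to be the main obstacle: it is equivalent to the lower bound $|\det J(\gamma^{-1},Z)|^2\ge c_n\prod_i\min(1,\mu_i(Y)^2)$ on the automorphy factor at the reducing element, and expresses the fact that reduction inverts the small eigendirections of $Y$ while leaving the large ones essentially unchanged. It is a standard consequence of the reduction theory of $\H_n$ (cf.\ \cite{maassbook}), where the reduced point maximizes $\det\Im$ over its $\Sp_{2n}(\Z)$-orbit; I would either cite it or deduce it from Minkowski reduction of $Y$. As a sanity check, for $n=1$ the inequality $|cz+d|^2\ge c^2y^2\ge\min(1,y^2)$ recovers Sturm's classical bound $|f(z)|\ll 1+y^{-k}$.
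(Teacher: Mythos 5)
Your proposal is correct and follows essentially the same route as the paper: a finite set of coset translates $F|_\rho\gamma$, the Koecher-type boundedness of Proposition \ref{p:koecher} on $V_\delta$, a weight-space bound on $\rho(Y^{1/2})$ (your operator-norm estimate is a minor variant of Lemma \ref{weightineq}), and then a reduction-theoretic step. The comparison you flag as the main obstacle, namely $\det Y' \le C_n \prod_{i=1}^n \max\bigl(\mu_i(Y),\mu_i(Y)^{-1}\bigr)$ for $Z=\gamma Z'$ with $Z'$ reduced, is exactly the content of Sturm's Proposition 2, which the paper states as Lemma \ref{l:sturm} and likewise invokes by citation rather than proof, so your plan to cite it puts your argument on the same footing as the paper's.
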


In order to prove this theorem, we will need a couple of lemmas.

\begin{lemma}\label{weightineq}
 For any $v \in V$, and any $Y>0$ in $M_n^{\sym}(\R)$, we have

 $$
  \left(\prod_{i=1}^n \mu_i(Y)^{\lambda_{n+1-i}}\right)\|v\| \le \|\rho(Y) v \| \le \left(\prod_{i=1}^n \mu_i(Y)^{\lambda_i}\right) \|v\|.
 $$
\end{lemma}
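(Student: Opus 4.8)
The plan is to prove Lemma~\ref{weightineq} by diagonalizing $Y$ and reducing to the behavior of $\rho$ on a maximal torus, where the highest weight theory makes the estimate transparent. First I would write $Y = k D k^{-1}$ where $k \in \OO(n)$ and $D = \diag(\mu_1(Y), \ldots, \mu_n(Y))$ with $\mu_1 \ge \cdots \ge \mu_n > 0$. Since $k \in \OO(n) \subseteq U(n)$ and the inner product $\langle, \rangle$ is $U(n)$-invariant, the operator $\rho(k)$ is unitary, so $\|\rho(Y) v\| = \|\rho(k)\rho(D)\rho(k^{-1}) v\| = \|\rho(D) \rho(k^{-1}) v\|$, and because $\rho(k^{-1})$ is also unitary we have $\|\rho(k^{-1})v\| = \|v\|$. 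Thus it suffices to prove the two-sided bound for the \emph{diagonal} matrix $D$, i.e.\ to show
$$
\left(\prod_{i=1}^n \mu_i^{\lambda_{n+1-i}}\right)\|w\| \le \|\rho(D) w\| \le \left(\prod_{i=1}^n \mu_i^{\lambda_i}\right)\|w\|
$$
for every $w \in V$.

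Next I would diagonalize the torus action. The restriction of $\rho$ to the diagonal torus of $\GL_n(\C)$ decomposes $V$ into a direct sum of weight spaces $V = \bigoplus_\nu V_\nu$, where $\rho(D)$ acts on $V_\nu$ by the scalar $\prod_{i=1}^n \mu_i^{\nu_i}$ for a weight $\nu = (\nu_1, \ldots, \nu_n)$. The key structural input is that these weight spaces are mutually orthogonal with respect to $\langle, \rangle$: this follows from the compatibility $\langle \rho(M)v_1, v_2\rangle = \langle v_1, \rho(M^\ast)v_2\rangle$ proved in the excerpt, applied to a diagonal $M = \diag(t_1, \ldots, t_n)$ with positive real $t_i$ (which is self-adjoint under $\ast$), so that distinct weights give distinct $\rho(M)$-eigenvalues on vectors that must therefore be orthogonal. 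Writing $w = \sum_\nu w_\nu$ with $w_\nu \in V_\nu$, orthogonality gives $\|\rho(D)w\|^2 = \sum_\nu \left(\prod_i \mu_i^{\nu_i}\right)^2 \|w_\nu\|^2$, and it remains to bound each scalar factor $\prod_i \mu_i^{\nu_i}$ uniformly.

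The arithmetic heart of the argument is then the inequality, valid for every weight $\nu$ occurring in $\rho$ and for all $\mu_1 \ge \cdots \ge \mu_n > 0$,
$$
\prod_{i=1}^n \mu_i^{\lambda_{n+1-i}} \le \prod_{i=1}^n \mu_i^{\nu_i} \le \prod_{i=1}^n \mu_i^{\lambda_i}.
$$
This is a standard consequence of dominance theory for highest weights: every weight $\nu$ of $\rho$ lies in the convex hull of the Weyl-group orbit of the highest weight $\lambda = (\lambda_1, \ldots, \lambda_n)$, and when the $\mu_i$ are arranged in decreasing order the function $\nu \mapsto \prod_i \mu_i^{\nu_i}$ is maximized over this orbit-hull by the \emph{sorted} weight $(\lambda_1, \ldots, \lambda_n)$ and minimized by the reverse-sorted weight $(\lambda_n, \ldots, \lambda_1)$, giving exactly the displayed exponents $\lambda_i$ and $\lambda_{n+1-i}$ respectively. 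Concretely one reduces this to a rearrangement/Abel-summation inequality: since the partial sums $\sum_{i \le k}\nu_i$ are squeezed between $\sum_{i \le k}\lambda_{n+1-i}$ and $\sum_{i \le k}\lambda_i$, and $\log \mu_1 \ge \cdots \ge \log \mu_n$, summation by parts yields the bounds on $\sum_i \nu_i \log \mu_i$.

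I expect the main obstacle to be supplying a clean self-contained justification of this last combinatorial step, since invoking the full machinery of dominant weights and Weyl orbits may feel disproportionate for an otherwise elementary note; the cleanest route is probably to prove directly, via Abel summation from the weight-space structure, that $|\nu_i|$-type partial-sum constraints hold and that rearrangement forces the extremal exponents to be the sorted and reverse-sorted copies of $\lambda$. Once the scalar inequality is in hand, combining it with the orthogonal weight-space decomposition and the reduction via unitarity of $\rho(k)$ immediately yields both sides of the lemma, completing the proof.
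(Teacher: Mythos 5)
Your proof is correct and takes essentially the same approach as the paper's: reduce to diagonal $Y$ using the $U(n)$-invariance of the norm, then argue on a basis of weight vectors. The paper's proof is only a two-line sketch of this strategy; the weight-space orthogonality and the rearrangement/dominance inequality you spell out are exactly the details it leaves to the reader, and you have supplied them correctly.
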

\begin{proof} This follows from considering a basis of weight vectors. Note that it is sufficient to prove the inequalities for $Y$ diagonal as any $Y$ can be diagonalized by a matrix in $U(n)$ and our norm is invariant by the action of $U(n)$.
\end{proof}

Next, we record a result due to Sturm.

\begin{lemma}[Prop. 2 of \cite{sturm}]\label{l:sturm}Suppose that $\F$ is a fundamental domain for $\Sp_{2n}(\Z)$ such that there is some $\delta>0$ such that  $Z = X +i Y\in \F$ implies that $Y \in V_\delta$. Let $\phi: \H_n \rightarrow \C$ be any function such that there exist constants $c_1>0$, $\lambda \ge 0$ with the property that $| \phi(\gamma Z)| \le c_1 \det(Y)^\lambda$ for all $Z \in \F$ and $\gamma \in \Sp_{2n}(\Z)$. Then for all $Z \in \H_n$ we have the inequality
 $$
  |\phi(Z)| \le c_\phi \prod_{i=1}^n (\mu_i(Y)^{\lambda} + \mu_i(Y)^{-\lambda}).
 $$
\end{lemma}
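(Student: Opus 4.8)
The plan is to transport an arbitrary $Z = X+iY \in \H_n$ into the fundamental domain $\F$, apply the hypothesis there, and then control the resulting determinant by reduction theory. First I would use that $\F$ is a fundamental domain: given $Z$, choose $\gamma \in \Sp_{2n}(\Z)$ with $\gamma Z \in \F$. In the hypothesis, take the fundamental-domain point to be $\gamma Z$ and the group element to be $\gamma^{-1}$; since $\gamma^{-1}(\gamma Z) = Z$, this gives $|\phi(Z)| = |\phi(\gamma^{-1}(\gamma Z))| \le c_1 \det(\Im(\gamma Z))^\lambda$. Thus everything reduces to bounding $\det(\Im(\gamma Z))$ from above in terms of the eigenvalues $\mu_i(Y)$.

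For this I would use the standard transformation formula $\Im(\gamma Z) = (J(\gamma,Z)^{*})^{-1}\, Y\, J(\gamma,Z)^{-1}$, which upon taking determinants gives $\det(\Im(\gamma Z)) = |\det J(\gamma,Z)|^{-2}\det(Y)$, where $J(\gamma,Z) = CZ+D$ with $\gamma = \mat{A}{B}{C}{D}$. Hence the lemma follows once one proves the key estimate
\begin{equation*}
|\det(CZ+D)|^2 \ \ge\ c_n^{-1}\prod_{i:\,\mu_i(Y)<1}\mu_i(Y)^2,
\end{equation*}
valid for every integral symmetric coprime pair $(C,D)$ arising as the bottom block of an element of $\Sp_{2n}(\Z)$, with $c_n$ depending only on $n$. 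Indeed, using $\det(\Im(\gamma Z)) = \det(Y)/|\det(CZ+D)|^2$ and $\det Y = \prod_i \mu_i(Y)$, this estimate is exactly equivalent to $\det(\Im(\gamma Z)) \le c_n \prod_i \max(\mu_i(Y), \mu_i(Y)^{-1})$. Since $\max(a,a^{-1})^\lambda = \max(a^\lambda, a^{-\lambda}) \le a^\lambda + a^{-\lambda}$ for $a>0$ and $\lambda \ge 0$, raising to the power $\lambda$ and combining with the first paragraph yields $|\phi(Z)| \le c_1 c_n^\lambda \prod_i (\mu_i(Y)^\lambda + \mu_i(Y)^{-\lambda})$, i.e.\ the claim with $c_\phi = c_1 c_n^\lambda$.

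It remains to establish the key estimate, and this is the heart of the matter. As a model I would first treat $n=1$, which is the classical $\SL_2(\Z)$ situation: for a coprime pair $(c,d)$ one has $|cz+d|^2 = (cx+d)^2 + c^2 y^2$, which is $\ge 1$ when $c=0$ and $\ge c^2 y^2 \ge y^2$ when $c \ne 0$; in either case $|cz+d|^2 \ge \min(1, y^2) = \prod_{\mu<1}\mu^2$, the desired bound with $c_1 = 1$. I would then check the diagonal case $Z = iY$ with $Y$ diagonal, where the estimate is again sharp, to confirm the exponents. For general $Z$ the estimate is genuinely a statement of Siegel reduction theory: bounding $|\det(CZ+D)|$ from below is the same as showing that the maximum of $\det\Im$ over the $\Sp_{2n}(\Z)$-orbit of $Z$ cannot exceed $c_n\prod_i \max(\mu_i, \mu_i^{-1})$. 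I would prove this by induction on $n$ (organized on the rank of $C$): left multiplication of $(C,D)$ by a unit $U \in \GL_n(\Z)$ leaves $|\det(CZ+D)|$ unchanged, so one may put $C$ into Hermite form, split off the rows where $C$ vanishes (whose rows of $D$ are then forced to be primitive by coprimality), and reduce to a lower-dimensional instance together with the elementary inequality above applied in the complementary directions.

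The main obstacle is precisely this reduction-theoretic step: unlike $n=1$ it is not a one-line computation, because the symplectic constraints on $(C,D)$ — the symmetry of $C\,{}^tD$ and the primitivity of the pair — must be used to prevent $|\det(CZ+D)|$ from degenerating, and a naive geometry-of-numbers bound with respect to the positive definite form attached to $Z$ is insufficient, since such a bound is $\Sp_{2n}(\Z)$-invariant and hence cannot see the eigenvalues of $Y$ itself (which are not orbit invariants). This is exactly the content of Sturm's Proposition~2, and I would either carry out the induction in detail using the symplectic relations or invoke the standard estimates from Minkowski reduction of positive definite forms (as in \cite{maassbook}) to complete it; once the key estimate is in hand, the two routine algebraic paragraphs above finish the proof.
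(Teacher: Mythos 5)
Your outer reductions are all correct: transporting $Z$ into $\F$ and applying the hypothesis with the pair $(\gamma Z,\gamma^{-1})$, the identity $\det\Im(\gamma Z)=\det Y/|\det(CZ+D)|^2$, the equivalence of your ``key estimate'' with $\det\Im(\gamma Z)\le c_n\prod_i\max(\mu_i,\mu_i^{-1})$, and the elementary step $\max(a,a^{-1})^\lambda\le a^\lambda+a^{-\lambda}$. But the proof stands or falls on the key estimate $|\det(CZ+D)|^2\ge c_n^{-1}\prod_{\mu_i(Y)<1}\mu_i(Y)^2$, and this you state, correctly identify as the heart of the matter, and then do not prove: the Hermite-form induction is only gestured at (it is not explained how the symmetry of $C\,{}^tD$ and primitivity actually enter, nor how the split-off rows contribute), and ``invoke the standard estimates from Minkowski reduction'' is a pointer, not an argument. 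So as a self-contained proof there is a genuine gap at precisely the load-bearing step. For comparison: the paper does not prove this lemma at all --- it quotes it verbatim as Proposition 2 of \cite{sturm} --- so your first two paragraphs reconstruct the reduction correctly, but the cited proposition is exactly the part you leave open.

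For what it is worth, your key estimate is true (even with $c_n=1$) and can be completed along the lines you suspect, via the classical normal form for coprime symmetric pairs (see e.g.\ \cite{maassbook}) rather than by a fresh induction. If ${\rm rank}\,C=r$, there are unimodular $U,V$ and an integral coprime symmetric pair $(C_1,D_1)$ of degree $r$ with $\det C_1\ne 0$ such that $|\det(CZ+D)|=|\det C_1|\,|\det({}^tWZW+S)|$, where $W$ consists of the first $r$ columns of $V$ (integral, full column rank) and $S=C_1^{-1}D_1$ is real symmetric; in particular $|\det C_1|\ge 1$. For any real symmetric $X'$ and $Y'>0$ one has $|\det(X'+iY')|=\det Y'\prod_j(1+a_j^2)^{1/2}\ge\det Y'$, with $a_j$ the eigenvalues of $Y'^{-1/2}X'Y'^{-1/2}$; applying this with $Y'={}^tWYW$ gives $|\det(CZ+D)|\ge\det({}^tWYW)$. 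Writing $W=PT$ with $P$ having orthonormal columns, $\det({}^tWYW)=\det({}^tWW)\det({}^tPYP)\ge\prod_{j=n-r+1}^{n}\mu_j(Y)$, since $\det({}^tWW)$ is a positive integer and Cauchy interlacing gives $\lambda_j({}^tPYP)\ge\mu_{n-r+j}(Y)$; finally the product of the $r$ smallest eigenvalues dominates $\prod_{\mu_i(Y)<1}\mu_i(Y)$ because the eigenvalues below $1$ form a terminal segment (the rank-$0$ case being trivial, as then $|\det D|=1$). Note this also resolves your own invariance objection: the normal form isolates the sub-determinant $\det({}^tWZW+S)$ of $Z$ itself, which is not an orbit invariant, so no geometry-of-numbers invariance obstruction arises.
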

\begin{proof}[Proof of Theorem \ref{t:boundf}]
Let $F$ be as in the statement of the theorem, so that $F \in N_\rho(\Gamma)$ for some $\Gamma \subset \Sp_{2n}(\Z)$. We let $\gamma_1, \gamma_2, \ldots, \gamma_t$ be a set of representatives for $\Gamma \bs \Sp_{2n}(\Z)$. Fix an orthonormal basis $v_1, v_2, \ldots, v_d$ of $V$ and for any $G \in N_\rho^{(n)}$ define $G_i(Z) := \langle G(Z), v_i \rangle$. Note that $\|G(Z)\| =
\left(\sum_{i=1}^d |G_i(Z)|^2\right)^{1/2}$.

Let $\F$ be as in Lemma \ref{l:sturm}.  By Proposition \ref{p:koecher}, it follows that there is a constant $C$ depending on $F$ such that $|(F|_{\rho} \gamma_r)_i(Z)| \le C$ for all $1 \le r\le t$, $1\le i \le n$, and $Z \in \F$. Moreover,  for any $Z = X+iY \in \F$ we have each $\mu_j(Y^{1/2}) \ge \delta^{1/2}$ and therefore $\left(\prod_{j=1}^n \mu_j(Y^{1/2})^{\lambda_j}\right) \le \det(Y)^{\lambda_1/2} \delta^{\frac12\sum_{j=2}^n (\lambda_j - \lambda_1)}.$ Now consider the function $\phi(Z) = \|\rho(Y^{1/2})F(Z) \|$.  For any $\gamma \in \Sp_{2n}(\Z)$, there exists $\gamma_0 \in \Gamma$ and some $1 \le r \le t$ such that $\gamma = \gamma_0 \gamma_r$. An easy calculation shows that
$$
 \|\phi(\gamma Z)\| = \|\rho(Y^{1/2})(F|_{\rho} \gamma_r)(Z) \|.
$$
So for all $Z \in \F$, $\gamma \in \Sp_{2n}(\Z)$ we have, using Lemma \ref{weightineq} and the above arguments,
$$
 \|\phi(\gamma Z) \| \le \det(Y)^{\lambda_1/2}  \delta^{\frac12\sum_{i=2}^n (\lambda_i - \lambda_1)} d^{1/2} C.
$$
So the conditions of Lemma \ref{l:sturm} hold with $\lambda = \lambda_1/2$. This concludes the proof of Theorem \ref{t:boundf}.
\end{proof}

\begin{corollary}For any $F \in N^{(n)}_{\rho}$, there is a constant $C_F$ (depending only on $F$) such that for all $Z = X+iY \in \H_n$ we have $$\|\rho(Y^{1/2})F(Z) \| \le C_F \ (1 + \Tr(Y))^{n \lambda_1} \ (\det Y)^{- \lambda_1/2}.$$
\end{corollary}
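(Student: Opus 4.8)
The plan is to deduce this corollary directly from Theorem \ref{t:boundf}, so the whole task reduces to the purely scalar problem of bounding the product $\prod_{i=1}^n (\mu_i(Y)^{\lambda_1/2} + \mu_i(Y)^{-\lambda_1/2})$ by the right-hand side of the corollary. First I would factor a copy of $\mu_i(Y)^{-\lambda_1/2}$ out of each factor and use $\det Y = \prod_{i=1}^n \mu_i(Y)$ to write
$$
\prod_{i=1}^n \left(\mu_i(Y)^{\lambda_1/2} + \mu_i(Y)^{-\lambda_1/2}\right) = \left(\prod_{i=1}^n \mu_i(Y)\right)^{-\lambda_1/2} \prod_{i=1}^n \left(1 + \mu_i(Y)^{\lambda_1}\right) = (\det Y)^{-\lambda_1/2} \prod_{i=1}^n \left(1 + \mu_i(Y)^{\lambda_1}\right).
$$
This already exposes the factor $(\det Y)^{-\lambda_1/2}$ occurring in the corollary, so it remains only to control the product $\prod_{i=1}^n (1 + \mu_i(Y)^{\lambda_1})$ by $(1 + \Tr(Y))^{n\lambda_1}$.

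Next I would observe that, since all eigenvalues are positive, each one satisfies $\mu_i(Y) \le \sum_{j=1}^n \mu_j(Y) = \Tr(Y)$. Assuming $\lambda_1 \ge 1$, the binomial theorem gives the elementary scalar inequality $1 + t^{\lambda_1} \le (1+t)^{\lambda_1}$ for all $t \ge 0$: indeed $(1+t)^{\lambda_1} = \sum_{k=0}^{\lambda_1} \binom{\lambda_1}{k} t^k$ already contains the $k=0$ term and the $k=\lambda_1$ term, with every intermediate term nonnegative. Applying this with $t = \mu_i(Y) \le \Tr(Y)$ yields $1 + \mu_i(Y)^{\lambda_1} \le (1 + \Tr(Y))^{\lambda_1}$ for each $i$, and taking the product over $i$ gives $\prod_{i=1}^n (1 + \mu_i(Y)^{\lambda_1}) \le (1 + \Tr(Y))^{n\lambda_1}$.

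Combining this estimate with the factorization above and the bound of Theorem \ref{t:boundf} then produces exactly the claimed inequality, with the same constant $C_F$. There is essentially no serious obstacle here: the only two points requiring a moment's attention are the verification of the scalar inequality $1 + t^{\lambda_1} \le (1+t)^{\lambda_1}$ and the degenerate case $\lambda_1 = 0$. In the latter case all weights vanish, so $\rho$ is the trivial representation and the left-hand side of Theorem \ref{t:boundf} is simply bounded by the constant $2^n C_F$, which matches the (constant) right-hand side of the corollary after enlarging $C_F$ by a factor of $2^n$.
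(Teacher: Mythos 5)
Your proof is correct and follows essentially the same route as the paper: both deduce the corollary from Theorem \ref{t:boundf} by factoring out $\mu_i(Y)^{-\lambda_1/2}$ to produce $(\det Y)^{-\lambda_1/2}$ and then bounding $\prod_{i=1}^n\bigl(1+\mu_i(Y)^{\lambda_1}\bigr)$ via the elementary inequality $1+y_i^{\lambda}\le(1+y_1+\cdots+y_n)^{\lambda}$. Your explicit treatment of the degenerate case $\lambda_1=0$ is a small point of extra care that the paper passes over silently, but it changes nothing of substance.
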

\begin{proof}This follows immediately from Theorem \ref{t:boundf} and the following elementary inequality, which holds for all positive integers $\lambda, n$ and  all positive reals $y_1, \ldots , y_n$:
$$
\prod_{i=1}^n (1 + y_i^{\lambda}) \le (1+ y_1 +\ldots +y_n)^{n \lambda}.
$$
To prove the above inequality, note that $1 + y_i^{\lambda}\le (1+ y_1 +\ldots +y_n)^{\lambda}$ for each $i$. Now take the product over $1 \le i \le n$.
\end{proof}
\section{The moderate growth property}
Given any $F \in N_{\rho}^{(n)}$, we define a smooth function $\vec\Phi_F$ on $\Sp_{2n}(\R)$ by the formula  $$\vec\Phi_F(g)=\rho(J(g,I))^{-1}F(gI),$$  where $I :=iI_{2n}.$

\begin{proposition}\label{Phivecprop}
Let $F \in N_{\rho}^{(n)}$ and $\vec\Phi_F$ be defined as above. Then there is a constant $C$  such that for all $Z = X+iY \in \H_n$ we have $$\left \|\vec\Phi_F\left(\mat{Y^{1/2}}{X Y^{1/2}}{}{Y^{-1/2}} \right)\right \| \le C \prod_{i=1}^n (\mu_i(Y)^{\lambda_1/2} + \mu_i(Y)^{-\lambda_1/2}).$$
\end{proposition}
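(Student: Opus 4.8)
The plan is to simply unwind the definition of $\vec\Phi_F$ at the displayed group element and then quote Theorem~\ref{t:boundf}; all of the analytic content of the proposition already lives in that theorem, so what remains is purely factor-of-automorphy bookkeeping. First I would write $I = iI_n$ for the base point and take $g = \mat{Y^{1/2}}{XY^{-1/2}}{}{Y^{-1/2}}$ for the element appearing in the statement (the off-diagonal block must be $XY^{-1/2}$ for $g$ to be symplectic and to realize the correct action; this is the standard representative in the Siegel parabolic). Since the lower-left block of $g$ vanishes, the cocycle is immediate: $J(g,I) = C\cdot I + D = Y^{-1/2}$. A one-line computation with the fractional-linear action then gives $gI = (A I + B)(C I + D)^{-1} = (iY^{1/2} + XY^{-1/2})\,Y^{1/2} = X + iY = Z$.

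With these two identities in hand, I would plug into the definition $\vec\Phi_F(g) = \rho(J(g,I))^{-1} F(gI)$ to obtain $\vec\Phi_F(g) = \rho(Y^{-1/2})^{-1} F(Z) = \rho(Y^{1/2}) F(Z)$, where the last equality uses that $\rho$ is a representation, so $\rho(Y^{-1/2})^{-1} = \rho\big((Y^{-1/2})^{-1}\big) = \rho(Y^{1/2})$. Taking $V$-norms yields $\|\vec\Phi_F(g)\| = \|\rho(Y^{1/2}) F(Z)\|$, and Theorem~\ref{t:boundf} bounds the right-hand side by $C_F \prod_{i=1}^n (\mu_i(Y)^{\lambda_1/2} + \mu_i(Y)^{-\lambda_1/2})$, which is exactly the asserted inequality.

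I do not expect any genuine obstacle here: the whole proposition collapses to the single identity $\vec\Phi_F(g) = \rho(Y^{1/2})F(Z)$, after which it is a direct corollary of Theorem~\ref{t:boundf}. The only step that will require care is the matrix bookkeeping, namely checking that the displayed element lies in $\Sp_{2n}(\R)$, that it sends $I$ to $Z = X+iY$, and that $J(g,I) = Y^{-1/2}$; everything downstream is formal.
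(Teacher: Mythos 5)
Your proof is correct and takes exactly the paper's approach: the paper's own proof of this proposition reads, in full, ``This follows immediately from Theorem \ref{t:boundf},'' and your computation of $J(g,I)=Y^{-1/2}$ and $gI=Z$ is precisely the bookkeeping left implicit there. You are also right to replace the upper-right block by $XY^{-1/2}$: as printed, the matrix with block $XY^{1/2}$ is not symplectic unless $X$ and $Y$ commute (and would send $I$ to $XY+iY$, not $Z$), so this is a typo in the paper, which recurs in the proof of Theorem \ref{modgrowththeorem}.
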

\begin{proof}This follows immediately from Theorem \ref{t:boundf}.
\end{proof}

A complex-valued function $\Phi$ on $\Sp_{2n}   (\R)$ is said to be slowly increasing if there is a constant $C$ and a positive integer $r$ such that $$|\Phi(g)| \le C (\Tr(g^* g))^{r}$$ for all $g \in \Sp_{2n}(\R)$.

\begin{theorem}\label{modgrowththeorem}
Let $F \in N_{\rho}^{(n)}$ and $\vec\Phi_F$ be as defined above. For some linear functional $\L$ on $V$, let $\Phi_F = \L \circ \vec\Phi_F$. Then the function $\Phi_F$ has the moderate growth property.
\end{theorem}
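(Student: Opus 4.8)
The plan is to establish the sharper statement that $|\Phi_F(g)| \le C\,(\Tr(g^\ast g))^r$ for suitable $C>0$ and integer $r>0$; since the standard norm on $\Sp_{2n}(\R)$ comes from its embedding into $\GL_{2n}$, this is exactly the asserted moderate growth property. As $\Phi_F = \L \circ \vec\Phi_F$ with $\L$ a fixed linear functional, we have $|\Phi_F(g)| \le \|\L\|_{\mathrm{op}}\,\|\vec\Phi_F(g)\|$, so it suffices to bound $\|\vec\Phi_F(g)\|$.

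The conceptual heart of the argument is that $g \mapsto \|\vec\Phi_F(g)\|$ is right-invariant under the maximal compact subgroup $K \subset \Sp_{2n}(\R)$, i.e.\ the stabilizer of $I = iI_n$. Indeed, for $k \in K$ we have $kI = I$, and the cocycle relation $J(gk,I) = J(g,kI)\,J(k,I) = J(g,I)\,J(k,I)$ together with the definition of $\vec\Phi_F$ gives $\vec\Phi_F(gk) = \rho(J(k,I))^{-1}\vec\Phi_F(g)$. Now $J(k,I) \in U(n)$ for every $k \in K$, and the defining property $\langle \rho(M)v_1,v_2\rangle = \langle v_1,\rho(M^\ast)v_2\rangle$ forces $\rho(u)$ to be unitary whenever $u^\ast = u^{-1}$, i.e.\ for all $u \in U(n)$; concretely $\langle\rho(u)v,\rho(u)v\rangle = \langle v,\rho(u^\ast u)v\rangle = \langle v,v\rangle$. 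Hence $\|\vec\Phi_F(gk)\| = \|\vec\Phi_F(g)\|$.

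With this in hand I would use the Iwasawa-type (Siegel) decomposition: writing $Z = g\langle I\rangle = X + iY$ and letting $p_Z$ denote the element of $\Sp_{2n}(\R)$ displayed in Proposition \ref{Phivecprop}, one has $g = p_Z k$ for some $k \in K$. Right-$K$-invariance and Proposition \ref{Phivecprop} then yield $\|\vec\Phi_F(g)\| = \|\vec\Phi_F(p_Z)\| \le C \prod_{i=1}^n (\mu_i(Y)^{\lambda_1/2} + \mu_i(Y)^{-\lambda_1/2})$. It remains to dominate the right-hand side by a power of $\Tr(g^\ast g)$. Since $k$ is real orthogonal, $\Tr(g^\ast g) = \Tr(p_Z^\ast p_Z)$, and a direct computation of $p_Z^\ast p_Z$ shows $\Tr(p_Z^\ast p_Z) \ge \Tr(Y) + \Tr(Y^{-1}) = \sum_{i=1}^n (\mu_i(Y) + \mu_i(Y)^{-1})$. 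Combining the elementary inequalities $t^{a} + t^{-a} \le 2(t + t^{-1})^{a}$ (valid for $t,a>0$) and $\prod_i c_i \le (\sum_i c_i)^n$ for positive reals $c_i$, I obtain $\prod_{i=1}^n (\mu_i(Y)^{\lambda_1/2} + \mu_i(Y)^{-\lambda_1/2}) \le 2^n (\Tr(g^\ast g))^{n\lambda_1/2}$. As $\Tr(g^\ast g) \ge 2n \ge 1$, replacing the exponent by any integer $r \ge n\lambda_1/2$ gives $|\Phi_F(g)| \le C'(\Tr(g^\ast g))^r$, completing the proof.

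The one genuinely conceptual step is the right-$K$-invariance of $\|\vec\Phi_F\|$, which rests on $\rho|_{U(n)}$ being unitary for the fixed inner product; I expect this to be the main point to get right. Everything after it --- the Siegel decomposition, the identity $\Tr(g^\ast g) = \Tr(p_Z^\ast p_Z)$ coming from orthogonality of $K$, and the two elementary inequalities --- is routine, though one should be slightly careful with the constants and with checking that the cross term appearing in $\Tr(p_Z^\ast p_Z)$ is nonnegative.
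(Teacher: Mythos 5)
Your proof is correct and follows essentially the same route as the paper's: reduce to bounding $\|\vec\Phi_F(g)\|$, use right-invariance under the maximal compact subgroup to restrict attention to elements of the form $p_Z$, and then apply Proposition \ref{Phivecprop}. The only difference is that you spell out the two steps the paper leaves implicit, namely the unitarity of $\rho|_{U(n)}$ underlying the $K$-invariance, and the elementary inequalities converting the eigenvalue product into a power of $\Tr(g^\ast g)$.
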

\begin{proof}Note that $|\Phi_F(g)| \le \| \L \| \ \|\vec\Phi_F(g) \|$. So it suffices to show that there is a constant $C$ and a positive integer $r$ such that
\begin{equation}\label{modgrowththeoremeq}
 \|\vec\Phi_F(g) \| \le C (\Tr(g^* g))^{r}
\end{equation}
for all $g \in \Sp_{2n}(\R)$. Since both sides of this inequality do not change when $g$ is replaced by $gk$, where $k$ is in the standard maximal compact subgroup of $\Sp_{2n}(\R)$, we may assume that $g$ is of the form $\mat{Y^{1/2}}{X Y^{1/2}}{}{Y^{-1/2}}$. Then the existence of appropriate $C$ and $r$ follows easily from Proposition \ref{Phivecprop}. Indeed, we can take any $r \ge n \lambda_1/2$ and $C$ the same constant as in Proposition \ref{Phivecprop}.
\end{proof}
\bibliography{growth}{}
\end{document}